\newtheorem{thm}{Theorem}
\newtheorem{cor}[thm]{Corollary}
\newtheorem{defin}[thm]{Definition}
\begin{document}

\title{Shape differentiability of the eigenvalues of elliptic systems}
\author{Davide Buoso\footnote{University of Padova; email: dbuoso@math.unipd.it.}}
\date{ }
\maketitle

Let $\Omega$ be a bounded open set in $\mathbb{R}^N$ of class $C^1$, $m\in\mathbb{N}$. By $H^1(\Omega)$ we denote the Sobolev space of functions in $L^2(\Omega)$ with derivatives in $L^2(\Omega)$, and by $H^1_0(\Omega)$ we denote the closure in $H^1(\Omega)$ of the space of $C^{\infty}$-functions with compact support in $\Omega$.

We consider the following eigenvalue problem in the weak form
\begin{equation}
\label{problema}
\int_{\Omega}\sum_{\alpha,\beta=1}^N\sum_{i,j=1}^ma_{\alpha\beta}^{ij}\frac{\partial u_i}{\partial x_{\alpha}}\frac{\partial\varphi_j}{\partial x_{\beta}}dx=\lambda\int_{\Omega}u\cdot\varphi dx,
\end{equation}
for any $\varphi\in V(\Omega)^m$, in the unknowns $u\in V(\Omega)^m$ (the eigenfunction), $\lambda\in\mathbb{R}$ (the eigenvalue), where $V(\Omega)$ denotes either $H^1_0(\Omega)$ (for Dirichlet boundary conditions) or $H^1(\Omega)$ (for Neumann boundary conditions).

Note that the classical formulation of the Dirichlet problem reads
	\begin{equation}
	\label{dirichletsys}
	\left\{\begin{array}{ll}
-\sum_{\alpha,\beta=1}^N\sum_{i=1}^ma_{\alpha\beta}^{ij}\frac{\partial^2 u_i}{\partial x_{\alpha}\partial x_{\beta}}=\lambda u_{j},j=1,\dots,m,&\mathrm{in\ }\Omega,\\
	u=0,&\mathrm{on\ }\partial\Omega,
	\end{array}\right.
	\end{equation}
	while the classical formulation of the Neumann problem reads
	\begin{equation}
\label{neumannsys}
	\left\{\begin{array}{ll}
-\sum_{\alpha,\beta=1}^N\sum_{i=1}^ma_{\alpha\beta}^{ij}\frac{\partial^2 u_i}{\partial x_{\alpha}\partial x_{\beta}}=\lambda u_{j},j=1,\dots,m,&\mathrm{in\ }\Omega,\\
	\sum_{\alpha,\beta=1}^N\sum_{i=1}^ma_{\alpha\beta}^{ij}\nu_{\beta}\frac{\partial u_i}{\partial x_{\alpha}}=0,j=1,\dots,m,&\mathrm{on\ }\partial\Omega,
	\end{array}\right.
\end{equation}
where $\nu$ denotes the outer unit normal to $\partial\Omega$.

Here and in the sequel $a_{\alpha\beta}^{ij}\in\mathbb{R}$ are constant coefficients satisfying $a_{\alpha\beta}^{ij}=a_{\beta\alpha}^{ji}$ and the Legendre-Hadamard condition, i.e., 
\begin{equation}
\label{lhc}
\sum_{\alpha,\beta=1}^N\sum_{i,j=1}^ma_{\alpha\beta}^{ij}\xi_i\xi_j\eta_{\alpha}\eta_{\beta}\geq\theta|\xi|^2|\eta|^2,\ \ \forall\xi\in\mathbb{R}^m,\forall\eta\in\mathbb{R}^N,
\end{equation}
for some $\theta>0$.

We consider in $H^1(\Omega)^m$ the bilinear form
\begin{equation}\label{systemform1}
<u,v>=\int_{\Omega}\sum_{\alpha,\beta=1}^N\sum_{i,j=1}^ma_{\alpha\beta}^{ij}\frac{\partial u_i}{\partial x_{\alpha}}\frac{\partial v_j}{\partial x_{\beta}}dx,
\end{equation}
 for any $u,v\in H^1(\Omega)^m$.  Note that, for instance, it is possible to prove that the bilinear form (\ref{systemform1}) defines on $H^1_0(\Omega)^m$ a scalar product whose induced norm is equivalent to the standard one. 

Note that problem (\ref{problema}) includes some important problems in linear elasticity. For instance, the choice $a_{\alpha\beta}^{ij}=\delta_{ij}\delta_{\alpha\beta}+k\delta_{i\alpha}\delta_{j\beta}$, where $\delta_{ij}$ is the Kronecher delta and $k\ge0$ a constant, leads to the Lam\'e eigenvalue problem
\begin{equation}\label{lame}
\left\{\begin{array}{ll}
-\Delta u -k\nabla{\rm div}u=\lambda u,&\mathrm{in\ }\Omega,\\
	u=0,&\mathrm{on\ }\partial\Omega.
	\end{array}\right.
\end{equation}
Problem (\ref{lame}) is very similar to the Reissner-Mindlin system
\begin{equation}
	\label{class1}
	\left\{
	\begin{array}{l l}
	-\frac{\mu}{12}\Delta\beta-\frac{\mu+\lambda}{12}\nabla\mathrm{div}\beta  -\frac{\mu k}{t^2}(\nabla w-\beta)
		= \frac{\gamma t^2}{12}\beta, & \mathrm{in}\ \Omega , \\
	-\frac{\mu k}{t^2}(\Delta w -\mathrm{div}\beta)=\gamma w, & \mathrm{in}\ \Omega,\\
	\beta = 0, \ \ w=0, & \mathrm{on}\ \partial\Omega,
	\end{array}
	\right.
	\end{equation}
which arises in the study of the vibrations of a clamped plate. Here $\mu,\lambda,k$ and $t$ are physical constants, $\gamma$ is the eigenvalue and $(\beta,w)$ is the eigenvector. Note that the current discussion does not comprehend problem (\ref{class1}), since it presents lower order terms. However, the arguments we use can be easily adapted in order to treat problem (\ref{class1}) as well (see \cite{bulareis}).

Thanks to condition (\ref{lhc}), it is possible to show that the eigenvalues of problem (\ref{problema}) are non-negative, have finite multiplicity and can be represented as a non-decreasing divergent sequence $\lambda_k[\Omega]$, $k\in\mathbb{N}$ where each eigenvalue is repeated according to its multiplicity. In particular,
\begin{equation*}
\lambda_k[\Omega]=\min_{\substack{E\subset V(\Omega )^m \\ {\rm dim} E=k}}\max_{\substack{u\in E \\ u\neq 0}}R[u],
\end{equation*}
for all $k\in\mathbb{N}$, where $V(\Omega)$ denotes either $H^1_0(\Omega)$ (for problem (\ref{dirichletsys})) or $H^1(\Omega)$ (for problem (\ref{neumannsys})), and $R[u]$ is the Rayleigh quotient defined by
\begin{equation*}
R[u]=\frac{\int_{\Omega}\sum_{\alpha,\beta=1}^N\sum_{i,j=1}^ma_{\alpha\beta}^{ij}\frac{\partial u_i}{\partial x_{\alpha}}\frac{\partial u_j}{\partial x_{\beta}}dx}{\int_{\Omega}|u|^2dx}.
\end{equation*}

In order to shorten our notation, in the sequel we shall use Einstein notation, hence summation symbols will be dropped.

In Section \ref{analitico} we examine the problem of shape differentiability of the eigenvalues of problem (\ref{problema}). We consider problem (\ref{problema}) in $\phi(\Omega)$ and pull it back to $\Omega$, where $\phi$ belongs to a suitable class of diffeomorphisms. This analysis was exploited in \cite{lala2004, lala2007} for the Laplace operator, in \cite{bula2013, buosohinged, bupro} for polyharmonic operators and in \cite{bulareis} for the Reissner-Mindlin system (\ref{class1}). In particular, we derive Hadamard-type formulas for the symmetric functions of the eigenvalues of problem (\ref{problema}).

In Section \ref{critico} we consider the problem of finding critical points for the symmetric functions of the eigenvalues of problem (\ref{problema}), under volume constraint. This is strictly related to the problem of shape optimization of the eigenvalue (see \cite{henrot} for a detailed discussion on the topic). Similarly to what was done in \cite{bula2013,buosohinged,bulareis,bupro,lalacri}, we provide a characterization for the critical domains, and show that, for a particular class of coefficients $a_{\alpha\beta}^{ij}$, balls are critical domains for all the symmetric functions of the eigenvalues.

%%%%%%%%%%%%%55%%%%%%%%%%%%%%%%%%%%%%%%%%%%%%%%%%%%%%%%%%%%%%%%%%%%%%%%%%%%%%%5
%%%%%%%%%%%%%55%%%%%%%%%%%%%%%%%%%%%%%%%%%%%%%%%%%%%%%%%%%%%%%%%%%%%%%%%%%%%%%5
%%%%%%%%%%%%%5                                 ANALITICITY
%%%%%%%%%%%%%55%%%%%%%%%%%%%%%%%%%%%%%%%%%%%%%%%%%%%%%%%%%%%%%%%%%%%%%%%%%%%%%5
%%%%%%%%%%%%%55%%%%%%%%%%%%%%%%%%%%%%%%%%%%%%%%%%%%%%%%%%%%%%%%%%%%%%%%%%%%%%%5

\section{Analyticity results}
\label{analitico}

Let $\Omega $ be a bounded open set in ${\mathbb{R}}^N$ of class $C^1$.  We shall consider problem (\ref{problema}) in a family of open sets  parameterized by suitable diffeomorphisms $\phi $
defined on $\Omega $. Namely, we set
$$
{\mathcal{A}}_{\Omega }=\biggl\{\phi\in C^1(\overline\Omega\, ; {\mathbb{R}}^N ):\ \inf_{\substack{x_1,x_2\in \overline\Omega \\ x_1\ne x_2}}\frac{|\phi(x_1)-\phi(x_2)|}{|x_1-x_2|}>0 \biggr\},
$$
where $C^1(\overline\Omega\, ; {\mathbb{R}}^N )$ denotes the space of all functions from $\overline\Omega $ to ${\mathbb{R}}^N$ of class $C^1$.  Note that if $\phi \in {\mathcal{A}}_{\Omega }$ then $\phi $ is injective, Lipschitz continuous and $\inf_{\overline\Omega }|{\rm det }\nabla \phi |>0$. Moreover, $\phi (\Omega )$ is a bounded open set of class $C^1$ and the inverse map $\phi^{(-1)}$ belongs to  ${\mathcal{A}}_{\phi(\Omega )}$.  Thus it is natural to consider problem (\ref{problema}) on $\phi (\Omega )$ and study  the dependence of $\lambda_k[\phi (\Omega )]$ on $\phi \in {\mathcal{A}}_{\Omega }$. To do so, we endow the space $C^1(\overline\Omega\, ; {\mathbb{R}}^N )$ with its usual norm.
%$\| f \|_{C^n_b(\Omega\, ;{\mathbb{R}}^N )}=\sup_{|\alpha|\le n,\ x\in\Omega  } |D^{\alpha }f(x)|$. 
Note that ${\mathcal{A}}_{\Omega }$ is an open set in  $C^1(\overline\Omega\, ;{\mathbb{R}}^N )$, see \cite[Lemma~3.11]{lala2004}.
    Thus, it makes sense to  study differentiability and analyticity properties of the maps $\phi \mapsto \lambda_k[\phi (\Omega )]$ defined for $\phi \in {\mathcal{A}}_{\Omega }$.
   For simplicity, we write  $\lambda_k[\phi ]$ instead of $\lambda_k[\phi (\Omega )]$.
   We fix a finite set of indexes $F\subset \mathbb{N}$
and we consider those maps $\phi\in {\mathcal{A}}_{\Omega }$ for which the eigenvalues
with indexes in $F$  do not coincide with eigenvalues with indexes not
in $F$; namely we set
$$
{\mathcal { A}}_{F, \Omega }= \left\{\phi \in {\mathcal { A}}_{\Omega }:\
\lambda_k[\phi ]\ne \lambda_l[\phi],\ \forall\  k\in F,\,   l\in \mathbb{N}\setminus F
\right\}.
$$
It is also convenient to consider those maps $\phi \in {\mathcal { A}}_{F, \Omega } $ such that all the eigenvalues with index in $F$
 coincide and set
$$
\Theta_{F, \Omega } = \left\{\phi\in {\mathcal { A}}_{F, \Omega }:\ \lambda_{k_1}[\phi ]=\lambda_{k_2}[\phi ],\, \
\forall\ k_1,k_2\in F  \right\} .
$$

 For $\phi \in {\mathcal { A}}_{F, \Omega }$, the elementary symmetric functions of the eigenvalues with index in $F$ are defined by
\begin{equation}
\label{sym1}
\Lambda_{F,s}[\phi ]=\sum_{ \substack{ k_1,\dots ,k_s\in F\\ k_1<\dots <k_s} }
\lambda_{k_1}[\phi ]\cdots \lambda_{k_s}[\phi ],\ \ \ s=1,\dots , |F|.
\end{equation}

We have the following 

\begin{thm}
\label{duesettesys}
Let $\Omega $ be a bounded open set in ${\mathbb{R}}^N$ of class $C^1$ and  $F$ be a finite set in  ${\mathbb{N}}$. 
The set ${\mathcal { A}}_{F, \Omega }$ is open in
$\mathcal{A}_{\Omega}$, and the real-valued maps %which take $\phi\in {\mathcal { A}}_{F, \Omega } $ to 
$ \Lambda_{F,s}$ are real-analytic on  ${\mathcal { A}}_{F, \Omega }$, for all $s=1,\dots , |F|$. 
Moreover, if $\tilde \phi\in \Theta_{F, \Omega }  $ is such that the eigenvalues $\lambda_k[\tilde \phi]$ assume the common value $\lambda_F[\tilde \phi ]$ for all $k\in F$, and  $\tilde \phi (\Omega )$ is of class $C^{2}$ then  the Frech\'{e}t differential of the map $\Lambda_{F,s}$ at the point $\tilde\phi $ is delivered by the formula
\begin{equation}
		\label{derivdsys}
		d|_{\phi=\tilde{\phi}}(\Lambda_{F,s})[\psi]
			=	-\lambda_F^s[\tilde{\phi}]\binom{|F|-1}{s-1}
			\sum_{l=1}^{|F|}
			\int_{\partial\tilde{\phi}(\Omega)}a_{\alpha\beta}^{ij}\frac{\partial v^{(l)}_i}{\partial y_{\alpha}}\frac{\partial v^{(l)}_j}{\partial y_{\beta}}\zeta\cdot\nu d\sigma,
			\end{equation}
			for problem (\ref{dirichletsys}), or
			\begin{multline}
		\label{derivnsys}
			d|_{\phi=\tilde{\phi}}(\Lambda_{F,s})[\psi]
			=-\lambda_F^s[\tilde{\phi}]\binom{|F|-1}{s-1}\sum_{l=1}^{|F|}\\
\int_{\partial\tilde{\phi}(\Omega)}\left(\lambda_F|v^{(l)}|^2-a_{\alpha\beta}^{ij}\frac{\partial v^{(l)}_i}{\partial y_{\alpha}}\frac{\partial v^{(l)}_j}{\partial y_{\beta}}\right)\zeta\cdot\nu d\sigma,
		\end{multline}
	for problem (\ref{neumannsys}), for all $\psi\in C^1(\overline\Omega;\mathbb{R}^N)$, where $\zeta=\psi\circ\tilde{\phi}^{(-1)}$ and $\{v^{(l)}\}_{l\in F}$ is an orthonormal basis in $V(\tilde \phi (\Omega ))^m$ (with respect to the scalar product (\ref{systemform1})) of the eigenspace associated with $\lambda_F[\tilde \phi]$.
\end{thm}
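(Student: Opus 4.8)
The plan is to follow the now-standard pullback strategy (as in \cite{lala2004,lala2007,bula2013,bupro}), transplanting problem (\ref{problema}) from the moving domain $\phi(\Omega)$ back to the fixed domain $\Omega$. Given $\phi\in\mathcal{A}_\Omega$ and $v\in V(\phi(\Omega))^m$, set $u=v\circ\phi$; then $u\in V(\Omega)^m$ and the change of variables $y=\phi(x)$ turns (\ref{problema}) on $\phi(\Omega)$ into a problem on $\Omega$ of the form
\begin{equation*}
\int_\Omega b_{\alpha\beta}^{ij}(\phi)\frac{\partial u_i}{\partial x_\alpha}\frac{\partial\varphi_j}{\partial x_\beta}\,dx=\lambda\int_\Omega c(\phi)\,u\cdot\varphi\,dx,
\end{equation*}
where $b_{\alpha\beta}^{ij}(\phi)$ is built from $a_{\alpha\beta}^{ij}$, the entries of $(\nabla\phi)^{-1}$, and $|\det\nabla\phi|$, while $c(\phi)=|\det\nabla\phi|$. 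The key point is that $\phi\mapsto b_{\alpha\beta}^{ij}(\phi)$ and $\phi\mapsto c(\phi)$ are real-analytic maps from $\mathcal{A}_\Omega$ into $C^0(\overline\Omega)$ (indeed into $C^1$ if $\phi$ is more regular); this follows from the analyticity of matrix inversion and of the determinant, exactly as in \cite[Lemma~3.11 ff.]{lala2004}. Hence the operators $L(\phi)$ (the ``stiffness'' operator) and $M(\phi)$ (the weighted mass operator), regarded as bounded linear operators on the fixed Hilbert space $V(\Omega)^m$ via Riesz representation with respect to (\ref{systemform1}), depend real-analytically on $\phi$, and $M(\phi)$ is compact and self-adjoint, nonnegative, with trivial kernel (for Dirichlet; for Neumann one works on the orthogonal complement of constants, or uses the standard device of adding a mass term). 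One can then invoke the abstract results on analytic families of compact self-adjoint operators (Kato's perturbation theory, in the form used in \cite{lala2007,bula2013}): although individual eigenvalues may cross and lose analyticity, the symmetric functions $\Lambda_{F,s}$ of any finite cluster $\{\lambda_k:k\in F\}$ that stays separated from the rest of the spectrum are real-analytic functions of the operator, hence of $\phi$ on $\mathcal{A}_{F,\Omega}$; openness of $\mathcal{A}_{F,\Omega}$ is immediate from continuity of the eigenvalues plus the separation condition.

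For the differential formulas (\ref{derivdsys})--(\ref{derivnsys}), I would argue at a point $\tilde\phi\in\Theta_{F,\Omega}$ where the cluster collapses to a single eigenvalue $\lambda_F[\tilde\phi]$ of multiplicity $|F|$. The abstract perturbation theory gives
\begin{equation*}
d|_{\phi=\tilde\phi}(\Lambda_{F,s})[\psi]=\lambda_F^{s-1}[\tilde\phi]\binom{|F|-1}{s-1}\sum_{l=1}^{|F|}\Big(d|_{\phi=\tilde\phi}\langle L(\phi)u^{(l)},u^{(l)}\rangle[\psi]-\lambda_F\,d|_{\phi=\tilde\phi}\langle M(\phi)u^{(l)},u^{(l)}\rangle[\psi]\Big),
\end{equation*}
where $u^{(l)}=v^{(l)}\circ\tilde\phi$ is an orthonormal basis of the eigenspace; the combinatorial factor $\binom{|F|-1}{s-1}$ and the power $\lambda_F^{s-1}$ come from differentiating the $s$-th elementary symmetric polynomial at the diagonal point $(\lambda_F,\dots,\lambda_F)$. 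One then computes the two bilinear derivatives by differentiating $b_{\alpha\beta}^{ij}(\phi)$ and $c(\phi)$ in the direction $\psi$ under the integral sign over the fixed domain $\Omega$, and finally changes variables back to $\tilde\phi(\Omega)$. After integration by parts, using the equation solved by $v^{(l)}$ and the boundary conditions (the Dirichlet condition $v^{(l)}=0$, resp.\ the Neumann/conormal condition), all interior terms cancel and one is left with a boundary integral over $\partial\tilde\phi(\Omega)$ of the form displayed in (\ref{derivdsys})--(\ref{derivnsys}), with $\zeta\cdot\nu=(\psi\circ\tilde\phi^{(-1)})\cdot\nu$ the normal component of the deformation field; this is where the hypothesis that $\tilde\phi(\Omega)$ is of class $C^2$ is used, to guarantee $H^2$-regularity of the eigenfunctions and legitimacy of the integrations by parts and trace computations.

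The main obstacle — and the step deserving the most care — is the boundary reduction: organizing the derivative of the transplanted bilinear forms so that, after one integration by parts, the bulk contributions assemble into combinations that vanish by the eigenvalue equation, leaving only the boundary term with the correct integrand. Concretely, the derivative of $b_{\alpha\beta}^{ij}(\phi)$ produces terms involving $\nabla\psi\circ\tilde\phi^{(-1)}$ contracted with $\nabla v^{(l)}$; one rewrites these, using the chain rule and the identity $\frac{\partial}{\partial y_\gamma}(\zeta_\gamma\,\partial_\alpha v_i)=(\mathrm{div}\,\zeta)\partial_\alpha v_i+\zeta_\gamma\,\partial_\gamma\partial_\alpha v_i$, as a divergence (which integrates to the boundary) plus a remainder that, when paired against the equation (\ref{dirichletsys}) or (\ref{neumannsys}) and the analogous derivative of the mass term, cancels identically. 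The symmetry $a_{\alpha\beta}^{ij}=a_{\beta\alpha}^{ji}$ is essential here. For Dirichlet one exploits that $v^{(l)}$ and its tangential derivatives vanish on the boundary, so only the normal derivative survives and $|\nabla v^{(l)}|^2$ reduces to $|\partial_\nu v^{(l)}|^2$-type expression encoded by $a_{\alpha\beta}^{ij}\partial_\alpha v^{(l)}_i\partial_\beta v^{(l)}_j$; for Neumann the conormal derivative vanishes instead, producing the extra $\lambda_F|v^{(l)}|^2$ term. This is precisely the computation carried out for scalar and polyharmonic operators in \cite{lala2004,lala2007,bula2013,bupro} and for the Reissner–Mindlin system in \cite{bulareis}, and it goes through here with only notational changes thanks to the constant-coefficient assumption and (\ref{lhc}).
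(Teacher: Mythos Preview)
Your proposal is correct and follows exactly the approach the paper indicates: the paper does not give a self-contained proof but simply states that the result is obtained by adapting \cite[Theorem~3.38]{lala2004} (Dirichlet) and \cite[Theorem~2.5]{lala2007} (Neumann), referring to \cite{buosotesi} for details, and your outline---pullback to the fixed domain, real-analytic dependence of the transplanted coefficients, abstract perturbation theory for the symmetric functions, and the Hadamard boundary reduction via integration by parts---is precisely that adaptation.
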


The proof of Theorem \ref{duesettesys} can be easily done adapting that of \cite[Theorem 3.38]{lala2004} for the Dirichlet problem (\ref{dirichletsys}), and that of \cite[Theorem 2.5]{lala2007} for the Neumann problem (\ref{neumannsys}), and it can be found in \cite{buosotesi}.

	\section{Isovolumetric perturbations}
\label{critico}

We consider the following extremum problems for the symmetric functions of the eigenvalues
\begin{equation}\label{min}
\min_{V[\phi ]={\rm const}}\Lambda_{F,s}[\phi ]\ \ \ {\rm or}\ \ \ \max_{   V[\phi ]={\rm const}}\Lambda_{F,s}[\phi],
\end{equation}
where $V[\phi ]$ denotes the $N$-dimensional Lebesgue measure of $\phi (\Omega )$.
Note that if $\tilde \phi \in {\mathcal{A}}_{\Omega }$ is a minimizer or maximizer in (\ref{min}) then $\tilde \phi $ is a critical domain transformation for the map $\phi\mapsto  \Lambda_{F,s}[\phi ]$ subject to volume constraint, i.e.,
\begin{equation*}%\label{inc}
{\rm Ker\ d}|_{\phi =\tilde \phi }V \subset {\rm Ker\ d}|_{\phi =\tilde \phi } \Lambda_{F,s},
\end{equation*}
where $V$ is the real valued function defined on ${\mathcal{A}}_{\Omega }$ which takes $\phi \in {\mathcal{A}}_{\Omega }$ to $V[\phi ]$.

The following theorem provides a characterization of all critical domain transformations $\phi$ (see also \cite{bula2013, buosohinged, bulareis, bupro, lalacri}).

\begin{thm}\label{moltiplicatorisys}
Let $\Omega $ be a bounded open set in ${\mathbb{R}}^N$ of class $C^1$,  and $F$ be a finite subset of ${\mathbb{N}}$.
Assume that $\tilde \phi\in \Theta_{F, \Omega }$ is such that $\tilde \phi (\Omega )$ is of class $C^{2}$ and that the eigenvalues $\lambda_j[\tilde \phi ]$ have the common value $\lambda_F[\tilde \phi ]$
for all $j\in F$.  Let $\{  v^{(l)}\}_{l\in F}$ be an orthornormal basis in $V(\tilde \phi (\Omega ))^m$ (with respect to the scalar product (\ref{systemform1})) of the eigenspace corresponding to $\lambda_F[\tilde \phi ]$. Then  $\tilde \phi$
is a critical domain transformation  for any of the functions $\Lambda_{F,s}$, $s=1,\dots , |F|$,  with  volume constraint
if and only if  there exists $c\in {\mathbb{R}}$ such that
	\begin{equation}
	\label{lacondizionedsys}
	\sum_{l=1}^{|F|}a_{\alpha\beta}^{ij}\frac{\partial v^{(l)}_i}{\partial y_{\alpha}}\frac{\partial v^{(l)}_j}{\partial y_{\beta}}=c,\ {\rm\ on\ }\partial\tilde\phi(\Omega),
	\end{equation}
for problem (\ref{dirichletsys}), or
\begin{equation}
	\label{lacondizionensys}
	\sum_{l=1}^{|F|}\left(\lambda_F|v^{(l)}|^2-a_{\alpha\beta}^{ij}\frac{\partial v^{(l)}_i}{\partial y_{\alpha}}\frac{\partial v^{(l)}_j}{\partial y_{\beta}}\right)=c,\ {\rm\ on\ }\partial\tilde\phi(\Omega),
	\end{equation}
for problem (\ref{neumannsys}).
\proof
The proof is a straightforward application of Lagrange Multipliers Theorem combined with formulas (\ref{derivdsys}) and (\ref{derivnsys}).
\endproof
\end{thm}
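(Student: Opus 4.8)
The plan is to deduce the characterization directly from the Lagrange Multiplier Theorem, using the Hadamard-type formulas of Theorem \ref{duesettesys} together with the classical formula for the differential of the volume. First, I would recall that, by definition, $\tilde\phi$ is a critical domain transformation for $\Lambda_{F,s}$ under volume constraint precisely when $\mathrm{Ker}\,d|_{\phi=\tilde\phi}V\subseteq\mathrm{Ker}\,d|_{\phi=\tilde\phi}\Lambda_{F,s}$. Since $V[\phi]=\int_\Omega|\det\nabla\phi|\,dx$, differentiating and applying the divergence theorem on $\tilde\phi(\Omega)$ gives
$$d|_{\phi=\tilde\phi}V[\psi]=\int_{\partial\tilde\phi(\Omega)}\zeta\cdot\nu\,d\sigma,\qquad\zeta=\psi\circ\tilde\phi^{(-1)},$$
which is not the zero functional (take $\psi$ with $\zeta=\nu$ on $\partial\tilde\phi(\Omega)$, so that $d|_{\phi=\tilde\phi}V[\psi]$ equals the surface measure of $\partial\tilde\phi(\Omega)$). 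By the elementary fact that a linear functional vanishing on the kernel of a nonzero linear functional is a scalar multiple of it, the kernel inclusion is equivalent to the existence of $\Lambda\in\mathbb{R}$ such that $d|_{\phi=\tilde\phi}\Lambda_{F,s}[\psi]=\Lambda\,d|_{\phi=\tilde\phi}V[\psi]$ for every $\psi\in C^1(\overline\Omega;\mathbb{R}^N)$.

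Next, I would substitute the expressions (\ref{derivdsys}) and (\ref{derivnsys}) for $d|_{\phi=\tilde\phi}\Lambda_{F,s}$. In both cases this differential is a boundary integral of $\zeta\cdot\nu$ against a kernel $\mathcal K$ — namely $\mathcal K=-\lambda_F^s[\tilde\phi]\binom{|F|-1}{s-1}\sum_{l=1}^{|F|}a_{\alpha\beta}^{ij}\frac{\partial v^{(l)}_i}{\partial y_\alpha}\frac{\partial v^{(l)}_j}{\partial y_\beta}$ for problem (\ref{dirichletsys}), and $\mathcal K=-\lambda_F^s[\tilde\phi]\binom{|F|-1}{s-1}\sum_{l=1}^{|F|}\big(\lambda_F|v^{(l)}|^2-a_{\alpha\beta}^{ij}\frac{\partial v^{(l)}_i}{\partial y_\alpha}\frac{\partial v^{(l)}_j}{\partial y_\beta}\big)$ for problem (\ref{neumannsys}) — so the Lagrange condition reads
$$\int_{\partial\tilde\phi(\Omega)}\big(\mathcal K-\Lambda\big)\,\zeta\cdot\nu\,d\sigma=0\qquad\text{for all }\psi\in C^1(\overline\Omega;\mathbb{R}^N).$$
Since $\tilde\phi$ is a $C^1$ diffeomorphism of $\overline\Omega$ onto $\overline{\tilde\phi(\Omega)}$, the correspondence $\psi\leftrightarrow\zeta$ is a bijection of $C^1(\overline\Omega;\mathbb{R}^N)$ onto $C^1(\overline{\tilde\phi(\Omega)};\mathbb{R}^N)$; and since $\tilde\phi(\Omega)$ is of class $C^2$ its outer normal $\nu$ admits a $C^1$ extension to a neighbourhood of $\partial\tilde\phi(\Omega)$, so that $\zeta\cdot\nu$ restricted to $\partial\tilde\phi(\Omega)$ can be made to equal any prescribed $g\in C^1(\partial\tilde\phi(\Omega))$. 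The kernel $\mathcal K$ is continuous up to $\partial\tilde\phi(\Omega)$ by standard elliptic regularity for the strongly elliptic constant-coefficient system associated with (\ref{lhc}) on the $C^2$ set $\tilde\phi(\Omega)$. The fundamental lemma of the calculus of variations then forces $\mathcal K\equiv\Lambda$ on $\partial\tilde\phi(\Omega)$.

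Finally, dividing $\mathcal K\equiv\Lambda$ by the constant $-\lambda_F^s[\tilde\phi]\binom{|F|-1}{s-1}$ yields exactly (\ref{lacondizionedsys}), resp. (\ref{lacondizionensys}), with $c=-\Lambda\big/\big(\lambda_F^s[\tilde\phi]\binom{|F|-1}{s-1}\big)$; the exceptional case $\lambda_F[\tilde\phi]=0$ (possible only for the Neumann problem) is trivial, since then $d|_{\phi=\tilde\phi}\Lambda_{F,s}\equiv0$, the corresponding eigenfunctions are constant and have vanishing gradient, and both sides of (\ref{lacondizionensys}) are zero. Conversely, if the stated identity holds then $\mathcal K$ is constant on $\partial\tilde\phi(\Omega)$, hence $d|_{\phi=\tilde\phi}\Lambda_{F,s}$ is a scalar multiple of $d|_{\phi=\tilde\phi}V$ and the inclusion of kernels is immediate. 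I expect the only genuinely delicate step to be the passage from the integral identity valid for all admissible $\psi$ to the pointwise boundary identity: it rests both on the surjectivity of $\psi\mapsto\zeta\cdot\nu|_{\partial\tilde\phi(\Omega)}$ and on enough boundary regularity of the eigenfunctions to make $\mathcal K$ a genuine continuous function on $\partial\tilde\phi(\Omega)$, which is precisely where the $C^2$ hypothesis on $\tilde\phi(\Omega)$ is used; the rest is bookkeeping, and the whole argument parallels the ones in \cite{bula2013,buosohinged,bulareis,bupro,lalacri}.
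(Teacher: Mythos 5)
Your proposal is correct and follows exactly the route the paper intends: the paper's proof is precisely the one-line combination of the Lagrange multiplier (kernel-inclusion) argument with the Hadamard formulas (\ref{derivdsys}) and (\ref{derivnsys}), which you have simply carried out in detail, including the fundamental-lemma step made legitimate by the $C^2$ regularity of $\tilde\phi(\Omega)$. The only superfluous point is your digression on $\lambda_F[\tilde\phi]=0$: this case is already excluded by the hypothesis that $\{v^{(l)}\}_{l\in F}$ is orthonormal with respect to the scalar product (\ref{systemform1}), since $<v^{(l)},v^{(l)}>=\lambda_F[\tilde\phi]\int_{\tilde\phi(\Omega)}|v^{(l)}|^2dy$ cannot equal $1$ when $\lambda_F[\tilde\phi]=0$ (and the claim there that the corresponding eigenfunctions must be constant is not obvious under the Legendre--Hadamard condition alone, so it is best omitted).
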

	
	Now we introduce the following
	
	\begin{defin}
	The operator $\mathcal{L}$ defined by
	$$\mathcal{L}(u)_{j}=-a_{\alpha\beta}^{ij}\frac{\partial^2u_i}{\partial x_{\alpha}\partial x_{\beta}}$$
	is said to be rotation invariant if there exists a group homomorphism
	\begin{equation*}
	S: O_N(\mathbb{R})\rightarrow O_m(\mathbb{R}),
	\end{equation*}
(i.e., $S(AB)=S(A)S(B)$ for all $A,B \in O_N(\mathbb{R})$) such that
	\begin{equation*}
	\mathcal{L}\left(S(R)^tu\circ R\right)=S(R)^t\mathcal{L}(u)\circ R,
	\end{equation*}
	for any $R\in O_N(\mathbb{R})$, and for any $u\in H^2_{loc}(\mathbb{R}^N)^{m}$.
	\label{rotinv}
	\end{defin}
	
	So far we have the following
		
	\begin{thm}
	\label{lepallesys}
Suppose that the operator associated with problem (\ref{problema})	is rotation invariant. 		Let $B$ be the unit ball in $\mathbb{R}^N$ centered at zero,
		and let $\lambda$ be an eigenvalue of problem (\ref{problema}) in $B$. Let $F$ be the subset of
		$\mathbb{N}\setminus\{0\}$ of all $k$ such that the $k$-th eigenvalue of problem (\ref{problema}) in $B$ coincides with $\lambda$. Let $v^{(1)},\dots,v^{(|F|)}$
		be an orthonormal basis of the eigenspace associated with the eigenvalue $\lambda$ in $V(B)^m$. Then there exists $c\in\mathbb{R}$ such that condition (\ref{lacondizionedsys})  (condition (\ref{lacondizionensys}) respectively) holds.
	\end{thm}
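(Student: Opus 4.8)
The plan is to exploit the rotational symmetry of both the ball $B$ and the operator $\mathcal{L}$ to show that the left‑hand sides of (\ref{lacondizionedsys}) and (\ref{lacondizionensys}), regarded as functions of $y\in\overline B$, are radial, hence constant on $\partial B$; taking $c$ equal to that constant then gives the assertion (equivalently, via Theorem~\ref{moltiplicatorisys}, it says that $B$, with the identity transformation --- which by the very definition of $F$ lies in $\Theta_{F,B}$ --- is critical for every $\Lambda_{F,s}$ under volume constraint). Since $B$ is smooth and the coefficients are constant, by elliptic regularity $v^{(l)}\in C^\infty(\overline B)^m$, so all the quantities involved make sense pointwise on $\partial B$.

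Fix $R\in O_N(\mathbb{R})$ and set $w^{(l)}:=S(R)^tv^{(l)}\circ R$ for $l\in F$. Since $R$ maps $B$ onto $B$ and $\partial B$ onto $\partial B$, and since $\mathcal{L}(w^{(l)})=S(R)^t\mathcal{L}(v^{(l)})\circ R=\lambda\,w^{(l)}$ by Definition~\ref{rotinv}, each $w^{(l)}$ is again an eigenfunction of problem (\ref{problema}) in $B$ associated with $\lambda$, satisfying the same boundary condition (for the Neumann problem one also uses that the rotation invariance preserves the natural boundary condition). The first key step is to extract from Definition~\ref{rotinv}, by matching the coefficients of the (symmetric) second derivatives $\partial^2_{\gamma\delta}u_p$ and using that $S(R)\in O_m(\mathbb{R})$, the covariance identity
\begin{equation}\label{rotcoeff}
a_{\alpha\beta}^{ij}\,S(R)_{pi}\,S(R)_{qj}\,R_{\gamma\alpha}\,R_{\delta\beta}=a_{\gamma\delta}^{pq}
\end{equation}
(for non‑symmetric coefficients this is to be read after symmetrizing in the pair $\alpha,\beta$, which leaves the operator, and the quantities occurring in (\ref{lacondizionedsys})--(\ref{lacondizionensys}) on $\partial B$, unchanged). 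Using (\ref{rotcoeff}) and the change of variables $y=Rx$ (note $|\det R|=1$) one checks $\langle w^{(l)},w^{(k)}\rangle=\langle v^{(l)},v^{(k)}\rangle=\delta_{lk}$, so $\{w^{(l)}\}_{l\in F}$ is again an orthonormal basis of the eigenspace of $\lambda$ with respect to (\ref{systemform1}). As any two such bases differ by an element of $O_{|F|}(\mathbb{R})$, the functions
$$y\mapsto\sum_{l=1}^{|F|}a_{\alpha\beta}^{ij}\frac{\partial v^{(l)}_i}{\partial y_\alpha}\frac{\partial v^{(l)}_j}{\partial y_\beta}
\qquad\text{and}\qquad
y\mapsto\sum_{l=1}^{|F|}|v^{(l)}|^2$$
do not depend on the chosen orthonormal basis; in particular they are unchanged if each $v^{(l)}$ is replaced by $w^{(l)}$.

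The second step is the transformation rule. From $w^{(l)}_p(y)=S(R)_{ip}\,v^{(l)}_i(Ry)$ one gets $\dfrac{\partial w^{(l)}_p}{\partial y_\gamma}(y)=S(R)_{ip}\,R_{\alpha\gamma}\,\dfrac{\partial v^{(l)}_i}{\partial x_\alpha}(Ry)$, so that, contracting with the coefficients and using (\ref{rotcoeff}),
\begin{align*}
\sum_{l=1}^{|F|}a_{\gamma\delta}^{pq}\frac{\partial w^{(l)}_p}{\partial y_\gamma}(y)\frac{\partial w^{(l)}_q}{\partial y_\delta}(y)
&=\Big(a_{\gamma\delta}^{pq}\,S(R)_{ip}\,S(R)_{jq}\,R_{\alpha\gamma}\,R_{\beta\delta}\Big)\sum_{l=1}^{|F|}\frac{\partial v^{(l)}_i}{\partial x_\alpha}(Ry)\frac{\partial v^{(l)}_j}{\partial x_\beta}(Ry)\\
&=\sum_{l=1}^{|F|}a_{\alpha\beta}^{ij}\frac{\partial v^{(l)}_i}{\partial x_\alpha}(Ry)\frac{\partial v^{(l)}_j}{\partial x_\beta}(Ry).
\end{align*}
Comparing this with the basis‑independence established above shows that $y\mapsto\sum_l a_{\alpha\beta}^{ij}\partial_\alpha v^{(l)}_i\,\partial_\beta v^{(l)}_j$ takes equal values at $y$ and at $Ry$; likewise $\sum_l|w^{(l)}(y)|^2=\sum_l|v^{(l)}(Ry)|^2$ because $S(R)^t$ is orthogonal, so $y\mapsto\sum_l|v^{(l)}(y)|^2$ is $R$‑invariant as well. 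Since $R$ was an arbitrary element of $O_N(\mathbb{R})$ and $O_N(\mathbb{R})$ acts transitively on $\partial B$, both the function in (\ref{lacondizionedsys}) and the function in (\ref{lacondizionensys}) are constant on $\partial B$, which is the claim.

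I expect the only real difficulty to be the derivation of the covariance identity (\ref{rotcoeff}) from Definition~\ref{rotinv} --- in particular, keeping track of the fact that matching second‑derivative coefficients forces the identity only after symmetrization in $\gamma,\delta$, and (for the Neumann problem) checking that the rotation also leaves the natural boundary condition invariant. Everything else is the change of variables, routine index bookkeeping, and the transitivity of the orthogonal group on the sphere.
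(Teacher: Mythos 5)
Your argument is correct and is essentially the paper's own proof: both rest on the observation that $\{(S(R)^t v^{(l)})\circ R\}_{l\in F}$ is again an orthonormal basis of the eigenspace (so the two sums are basis-independent, being unchanged under an orthogonal change of basis), combined with the covariance of the constant coefficients under the pair $(R,S(R))$ and the transitivity of $O_N(\mathbb{R})$ on $\partial B$; you merely make explicit the covariance identity and the chain-rule computation that the paper leaves implicit. The one fragile point is your parenthetical claim that symmetrizing in $(\alpha,\beta)$ leaves the boundary quantities unchanged: this is clear for the Dirichlet condition (\ref{lacondizionedsys}), where $\nabla v^{(l)}$ is purely normal on $\partial B$, but not for the Neumann condition (\ref{lacondizionensys}), where the antisymmetric part of the coefficients can contribute pointwise on the boundary; there one really needs the full (unsymmetrized) covariance, i.e.\ invariance of the form (\ref{systemform1}) itself --- which is also what guarantees that the rotated functions solve the weak Neumann problem --- a point the paper's proof likewise passes over silently.
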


	\begin{proof} First of all, note that by standard regularity theory (cf.\ \cite[\S 10.3]{adn}), the functions $v^{(l)}\in C^{\infty}(\overline B)$ for all $l\in F$. Moreover, since $\{v^{(l)}\}_{l\in F}$ is an orthonormal basis with respect to the scalar product (\ref{systemform1}) and $v^{(l)}$ satisfies equation (\ref{problema}) for any $l\in F$, then $\{v^{(l)}\}_{l\in F}$ is an orthonormal basis also with respect to the standard scalar product of $L^2$.
	
		Thanks to the rotation invariance, $\{(S(R)^tv_l)\circ R:l=1,\dots, |F|\}$ is another orthonormal basis
		for the eigenspace associated with $\lambda$, whenever $R\in O_n(\mathbb{R})$, where $S(R)$ is defined as in Definition \ref{rotinv}. 
		Since both $\{v^{(l)}:l=1,\dots, |F|\}$ and $\{(S(R)^tv^{(l)})\circ R:l=1,\dots, |F|\}$
		are orthonormal bases, then there exists $A[R]\in O_N(\mathbb{R})$ with matrix $(A_{rh}[R])_{r,h=1,\dots,|F|}$ such that
		$(S(R)^tv^{(r)})\circ R=\sum_{l=1}^{|F|}A_{rl}[R]v^{(l)}$.
		This implies that
		\begin{equation}
		\sum_{r=1}^{|F|}|v^{(r)}|^2\circ R=\sum_{l=1}^{|F|}|v^{(l)}|^2,
		\end{equation}
		by the $L^2$-orthonormality, and that
		\begin{equation}
		\sum_{r=1}^{|F|}\left(a_{\alpha\beta}^{ij}\frac{\partial v^{(r)}_i}{\partial y_{\alpha}}\frac{\partial v^{(r)}_j}{\partial y_{\beta}}\right)\circ R=
		\sum_{l=1}^{|F|}\left(a_{\alpha\beta}^{ij}\frac{\partial v^{(l)}_i}{\partial y_{\alpha}}\frac{\partial v^{(l)}_j}{\partial y_{\beta}}\right),
		\end{equation}
		by the orthonormality with respect to the scalar product (\ref{systemform1}).
		This concludes the proof.
	\end{proof}
	
	Thus we get the following
	\begin{cor}
	Let $\Omega$ be a domain in $\mathbb{R}^N$ of calss $C^1$. Suppose that the operator associated with problem (\ref{problema})	is rotation invariant. Let $\tilde{\phi}\in\mathcal{A}_{\Omega}$ be such that
	$\tilde{\phi}(\Omega)$ is a ball. Let $\tilde{\lambda}$ be an eigenvalue of problem (\ref{dirichletsys}) in $\tilde{\phi}(\Omega)$,
	and let $F$ be the set of $j\in\mathbb{N}\setminus\{0\}$ such that $\lambda_j[\tilde{\phi}]=\tilde{\lambda}$.
	Then $\tilde{\phi}$ is a critical point $\Lambda_{F,s}$  under volume constraint,
	for all $s=1,\dots,|F|$.
	\end{cor}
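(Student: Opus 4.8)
The plan is to derive this corollary as a direct consequence of Theorem \ref{lepallesys} combined with Theorem \ref{moltiplicatorisys}. First I would observe that the hypotheses line up almost verbatim: $\tilde\phi\in\mathcal{A}_\Omega$ with $\tilde\phi(\Omega)$ a ball, $\tilde\lambda$ an eigenvalue in $\tilde\phi(\Omega)$, and $F$ the full set of indices realizing $\tilde\lambda$. The only mild regularity point to dispatch is that $\tilde\phi(\Omega)$, being a ball, is of class $C^2$ (indeed $C^\infty$), so $\tilde\phi\in\Theta_{F,\Omega}$ and the standing assumptions of Theorem \ref{moltiplicatorisys} are met; note also that by definition of $F$ all eigenvalues with index in $F$ share the common value $\lambda_F[\tilde\phi]=\tilde\lambda$, and no eigenvalue outside $F$ equals it, so in fact $\tilde\phi\in\mathcal{A}_{F,\Omega}$ as well.

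Next I would fix an orthonormal basis $\{v^{(l)}\}_{l\in F}$ of the eigenspace associated with $\tilde\lambda$ in $V(\tilde\phi(\Omega))^m$, with respect to the scalar product (\ref{systemform1}). Since the operator is rotation invariant and $\tilde\phi(\Omega)$ is a ball — which we may as well take centered at the origin after a translation, translations being trivial critical directions and not affecting the constancy condition on $\partial\tilde\phi(\Omega)$ — Theorem \ref{lepallesys} applies and yields a constant $c\in\mathbb{R}$ such that condition (\ref{lacondizionedsys}) holds on $\partial\tilde\phi(\Omega)$ in the Dirichlet case (respectively (\ref{lacondizionensys}) in the Neumann case). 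That is, the function $\sum_{l\in F}a_{\alpha\beta}^{ij}\frac{\partial v^{(l)}_i}{\partial y_\alpha}\frac{\partial v^{(l)}_j}{\partial y_\beta}$ is constant on the boundary sphere.

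Finally I would invoke Theorem \ref{moltiplicatorisys}: the characterization stated there says precisely that $\tilde\phi$ is a critical domain transformation for every $\Lambda_{F,s}$, $s=1,\dots,|F|$, under the volume constraint \emph{if and only if} such a constant $c$ exists. Having produced $c$ in the previous step, the ``if'' direction gives the claim. One should remark that the conclusion is independent of the particular orthonormal basis chosen, since a change of basis within the eigenspace is an orthogonal transformation and leaves the sums $\sum_{l\in F}|v^{(l)}|^2$ and $\sum_{l\in F}a_{\alpha\beta}^{ij}\partial_{y_\alpha}v^{(l)}_i\,\partial_{y_\beta}v^{(l)}_j$ invariant — exactly the same invariance already used inside the proof of Theorem \ref{lepallesys}.

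I do not anticipate a genuine obstacle here: the corollary is a packaging of two theorems already in hand. The only point requiring the slightest care is the bookkeeping around the center of the ball and the verification that $\tilde\phi$ lands in the right perturbation class $\mathcal{A}_{F,\Omega}\supset\Theta_{F,\Omega}$; both are routine. The statement as written concerns problem (\ref{dirichletsys}), so the argument uses (\ref{lacondizionedsys}), but the identical reasoning with (\ref{lacondizionensys}) covers the Neumann case verbatim.
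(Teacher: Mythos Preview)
Your proposal is correct and is exactly the argument the paper has in mind: the corollary is stated immediately after Theorem~\ref{lepallesys} with no written proof, being the direct combination of that theorem with the characterization in Theorem~\ref{moltiplicatorisys}. The only small addition worth making is that Theorem~\ref{lepallesys} is stated for the \emph{unit} ball centered at the origin, so alongside the translation you mention one should also note the (equally trivial) scaling step, which is harmless since the coefficients $a_{\alpha\beta}^{ij}$ are constant.
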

	
	{\bf Acknowledgements.} The author is very thankful to Prof.\ Pier Domenico Lamberti for his useful comments and remarks. The author acknowledges financial support from the research project
`Singular perturbation problems for differential operators' Progetto di Ateneo
of the University of Padova. The author is a  member of the Gruppo Nazionale
per l'Analisi Matematica, la Probabilit\`a e le loro Applicazioni (GNAMPA) of
the Istituto Nazionale di Alta Matematica (INdAM).

\end{document}